\def\cor#1{{#1}}
\newtheorem{thm}{Theorem}
\newtheorem{lemma}{Lemma}
\def\nn{\nonumber}
\def\<{\left\langle}
\def\>{\right\rangle}
\def\nn{\nonumber}
\renewcommand\hat{\widehat}
\def\d{\mathrm{d}}
\def\bP{\mathbb{P}}
\def\hbP{{\hat{\bP}}}
\def\bnu{{\boldsymbol\nu}}
\def\p{\partial}
\def\hbP{{\hat{\bP}}}
\def\R{\mathbb{R}}
\def\Tau{\mathscr{T}}
\def\V{\mathscr{V}}
\def\sI{\mathscr{I}}
\def\hsI{\hat{\mathscr{I}}}
\def\M{\mathscr{M}}
\def\F{\mathscr{F}}
\def\bK{{\mathbf K}}
\def\hbK{{\hat{\bK}}}
\def\hphi{{\hat\phi}}
\def\hpsi{{\hat\psi}}
\def\hvarphi{\hat{\varphi}}
\def\hx{\hat{x}}
\def\x{x_1}
\def\y{x_2}
\def\z{x_3}
\def\grad{\nabla}
\def\hM{\hat{M}}
\def\O{\Omega}
\def\NC{\mathscr{NC}^h}
\def\Span{\mathrm{Span}}
\newcommand{\vertiii}[1]{{\left\vert\kern-0.25ex\left\vert\kern-0.25ex\left\vert
    #1\right\vert\kern-0.25ex\right\vert\kern-0.25ex\right\vert}}
\newcommand{\at}[1]{\kern-0.75ex\mid_{#1}}
\newcommand{\jump}[2]{{\left[#1\right]}_{{#2}}}
\newcommand{\djump}[2]{{\left[\kern-0.25ex\left[#1\right]\kern-0.25ex\right]}_{{#2}}}
\begin{document}
\title{Convergence analysis of a family of 14-node brick elements
\thanks{This project is supported by
NNSFC (Nos. 11301053,61033012,19201004,11271060,61272371), ``the Fundamental
Research Funds for the Central Universities''. Also partially supported by NRF of Korea (Nos. 2011-0000344, F01-2009-000-10122-0).}}
\author[a]{Zhaoliang Meng\thanks{Corresponding author: mzhl@dlut.edu.cn}}
\author[a,b]{Zhongxuan Luo}
\author[c,d]{Dongwoo Sheen}
\author[c]{Sihwan Kim}
\affil[a]{\it \small School of Mathematical Sciences,Dalian
University of Technology, Dalian, 116024, China}
\affil[b]{\it School of Software,Dalian
University of Technology, Dalian, 116620, China}
\affil[c]{\it
Department of Mathematics, Seoul National University,
Seoul 151-747, Korea.}
\affil[d]{\it
Interdisciplinary Program in
Computational Sciences \& Technology, Seoul National University,
Seoul 151-747, Korea.}

\maketitle
\begin{abstract}
    In this paper,  we will give \cor{a} convergence analysis for a family of 
    14-node elements which was proposed by I. M.
    Smith and D. J. Kidger[Int. J. Numer. Meth. Engng., 35:1263--1275, 1992]. The 14 DOFs are
    taken as the values at the eight vertices and the six
    face-centroids. For second-order elliptic problem\cor{s}, we will show that
    among all the Smith-Kidger 14-node elements,    
    Type 1, Type 2 and \cor{Type 5 elements provide
    optimal-order convergent solutions while Type 6 element gives one-order lower convergent solutions.
    Motivated by our proof, we also find that the order of convergence of 
    the Type 6 14-node nonconforming element improves to be optimal if 
    we change the DOFs into the values at the eight vertices and
    the integration values on the six faces. We also show that Type 1, Type 2 and
    Type 5 keep the optimal-order convergence if the integral DOFs on the six
    faces are adopted.}
\\[6pt]
\textbf{Keywords:}
Nonconforming element; Brick element; 14-node element;
Second-order elliptic problem; Smith-Kidger element
\end{abstract}
\section{Introduction}
\cor{Among many three-dimensional brick elements, there have been well-known
simplest conforming elements such as the trilinear element, the 27-node element and
seredipity elements. For the} nonconforming case,
Rannacher-Turek \cite{rannacher-turek} presented the rotated trilinear
elements with the two types of 6 DOFs: 
the face-centroid values type and  the face integrals type.
Douglas-Santos-Sheen-Ye \cite{dssy-nc-ell} then
modified the element of Rannacher-Turek such that the face-centroid values
type and the face integrals type are identical, that is, the element fulfills
the mean value property ``the face-centroid value = the face average
integral''. Later Park-Sheen
presented a $P_1$-nonconforming finite element on cubic meshes which
has only 4 DOFs \cite{parksheen-p1quad}. Wilson
also defined a linear-order nonconforming brick element \cite{ciar,
wilson-brick} with 11 DOFs whose polynomial space consists of trilinear
polynomials plus $\{ 1-\x^2, 1-\y^2,  1-\z^2\}$ on $\hbK=[-1,1]^3$
(see \cite[Page 217, Remark 4.2.3]{ciar}).
All these three dimensional elements are of $O(h)$ convergence rate in energy
norm. 

In the direction of obtaining higher-order convergent nonconforming elements, 
Smith and Kidger \cite{smith-kidger-14node} successfully developed
three-dimensional brick elements of 14 DOFs by adding additional polynomials 
to $P_2$. They investigated six
most possible 14 DOFs elements systematically considering the Pascal
pyramid, and concluded that their Type 1 (as well as Type 2) and Type
6 elements are successful ones. The additional polynomial space
for Type 1 element is the span of the
four nonsymmetric cubic polynomials $\{\x\y\z, \x^2\y, \y^2\z, \z^2\x\}$
while that for Type 6 element is the span of 
$\{\x\y\z, \x\y^2\z^2,$ $\x^2\y\z^2,
\x^2\y^2\z \}.$ Only recently a new nonconforming brick
element of 14 DOFs with quadratic and cubic convergence in the
energy and $L_2$ norms, respectively, is introduced by 
Meng, Sheen, Luo, and Kim
\cite{2013-Meng-p-}, which has the same type of DOFs but has only
cubic polynomials added to $P_2$. And then, the authors compared
these 14-node elements numerically, see \cite{2013-Kim-p-}.
Numerical tests \cor{show} that at least for second-order elliptic
problems Meng-Sheen-Luo-Kim and some of Smith-Kidger elements are
convergent with optimal order or with lower order.

\cor{A convergence} analysis for Meng-Sheen-Luo-Kim element was \cor{reported} in
\cite{2013-Meng-p-} and is fairly easy because it satisfies the
patch test of Irons \cite{1977-Irons-p557}, which implies that a
successful $P_k$-nonconforming element needs to satisfy that on each
interface the jump of adjacent polynomials be orthogonal to
$P_{k-1}$ polynomials on the interface. Unfortunately, it was found
in mathematics that the patch test is neither necessary nor
sufficient, see \cite{2002-Shi-p221} and the \cor{references} therein. As
\cor{shown} in this paper, the Smith-Kidge elements can only pass \cor{a} lower
order patch test or can not pass it, but give optimal \cor{order} convergence
from our numerical results or lower convergence order. Thus, 
the convergence analysis for Smith-Kidger element \cor{seems to be} quite different
and complex. For the convergence of the nonconforming element which
\cor{fail to} pass the patch test, \cor{see the works of} Stummel, Shi, {\it etc.}
\cite{1979-Stummel-p449-471,1987-Shi-p391-405,1984-Shi-p1233-1246,2000-Shi-p97-102}.

In this paper, we will \cor{provide a} convergence analysis for Smith-Kidger \cor{elements}
for second-order elliptic \cor{problems}. We show that although the patch test
\cor{fails}, Type 1, 2 and 5 Smith-Kidger \cor{elements are of} optimal
convergence order, \cor{while} Type 6 element loses one order of accuracy.
Furthermore, we also present a new brick element with the same DOFs, which is also
convergent \cor{in optimal orders}. Finally, if the
value at the eight vertices and the integration \cor{values over} six faces are taken as the
DOFs, then we can show that Type 1, 2, 5, and 6 elements and the proposed new element
can get optimal convergence order, which implies that  Type 6 element
improves one order of accuracy.

The paper is organized as follows. In \cor{Section} 2, we will introduce
Smith-Kidger {elements} and give the basis functions firstly. In
\cor{Section} 3, we define \cor{an} interpolation operator and present
\cor{our} convergence analysis for Type 1 Smith-Kidger element. In \cor{Section} 4,
we will analyze the other elements and present the corresponding
error estimates very briefly. In \cor{Section} 5, a new 14-node brick
element is proposed. Finally, in \cor{Section} 6, we conclude our
results.

\section{The quadratic nonconforming brick elements}


Let $\hbK=[-1,1]^3$ and denote the vertices and {face-centroids} by
$V_j,1\leq j\leq 8,$ and $M_k,1\leq k\leq 6,$ respectively. (see Fig. \ref{fig:cube})

\begin{figure}[ht]
    \begin{center}
    \includegraphics{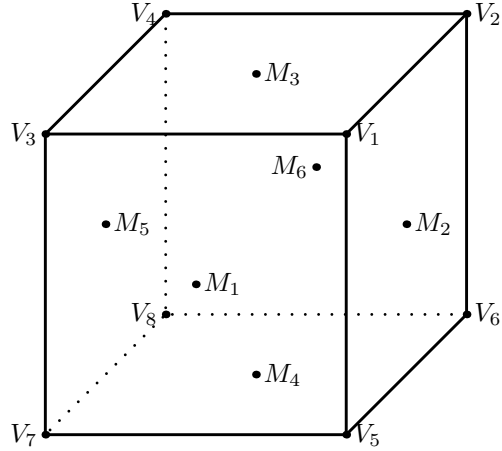}
    \end{center}
    \caption{$V_j$ denotes the vertices, $j=1,2,\ldots,8$, and $M_k$ denotes
      the \cor{face-centroids}, $k=1,2,\ldots,6.$}
    \label{fig:cube}
\end{figure}

Smith and Kidger \cite{smith-kidger-14node} defined the following six 14-node elements:
\begin{subeqnarray}
\hbP_{SK}^{(1)} &=& P_2(\hbK)\oplus\Span\{\hx_1\hx_2\hx_3, \hx_1^2\hx_2, \hx_2^2\hx_3, \hx_3^2\hx_1\}, \\
\hbP_{SK}^{(2)} &=& P_2(\hbK)\oplus\Span\{\hx_1\hx_2\hx_3, \hx_1\hx_2^2, \hx_2\hx_3^2, \hx_3\hx_1^2\}, \\
\hbP_{SK}^{(3)} &=& P_2(\hbK)\oplus\Span\{\hx_1\hx_2\hx_3, \hx_1^3, \hx_2^3, \hx_3^3\}, \\
\hbP_{SK}^{(4)} &=& P_2(\hbK)\oplus\Span\{\hx_1\hx_2\hx_3, \hx_1^2\hx_2\hx_3, \hx_1\hx_2^2\hx_3, \hx_1\hx_2\hx_3^2\}, \\
\hbP_{SK}^{(5)} &=& P_2(\hbK)\oplus\Span\{\hx_1\hx_2\hx_3, \hx_1^2\hx_2+\hx_1\hx_2^2, \hx_2^2\hx_3+\hx_2\hx_3^2, \hx_3^2\hx_1 + \hx_3\hx_1^2\},\\
\hbP_{SK}^{(6)} &=& P_2(\hbK)\oplus\Span\{\hx_1\hx_2\hx_3, \hx_1\hx_2^2\hx_3^2,
\hx_1^2\hx_2\hx_3^2, \hx_1^2\hx_2^2 \hx_3\},
\end{subeqnarray}
whose DOFs are the function values at the eight vertices and the six
face-centroids. They reported that Type 3 element fails and
is inadmissible. We also remark that Type 4 element is also inadmissible since
$(\hx_1^2-1)\hx_2\hx_3\in \hbP_{SK}^{(4)}$ vanishes at all these points. In \cite{2013-Kim-p-}, we observe that Type 1 (and 2)
and Type 5 elements give optimal convergence results both in $L^2$-
and $H^1$-norms at least for second-order elliptic problems,
while Type 6 element loses one order of accuracy in each norm.

In what follows, we will give \cor{an} error estimate for \cor{Type 1}
Smith-Kidger element \cor{in detail};  error estimates for the other
types can be obtained completely \cor{analogously} and thus are stated very briefly.

\cor{To begin with, 
denote by $V_m, m = 1,\cdots, 8,$ the eight vertices 
$(j,k,l), |j| = |k| = |l|= 1, j,k,l\in \mathbb Z,$
and and by $V_m, m = 1,\cdots, 6,$ the six face-centroids
$(j,k,l), |j| + |k| + |l|= 1, j,k,l\in \mathbb Z.$
The basis functions corresponding to the eight vertices $V_j, j = 1,\cdots,8,$ 
are denoted by $\hphi^V_{V_j},$ and those corresponding to 
the six face-centroids  by $\hphi^F_{M_j}, j = 1, \cdots, 6.$} 

\cor{To describe the brick elements in a uniform fashion, set 
\begin{eqnarray}\label{eq:r}
\begin{split}
r_0(\hx_1,\hx_2,\hx_3) = \hx_1\hx_2\hx_3,~ r_1(\hx_1,\hx_2,\hx_3) =\hx_1\hx_3^2
,\\
r_2(\hx_1,\hx_2,\hx_3) =\hx_2 \hx_1^2,~r_3(\hx_1,\hx_2,\hx_3) = \hx_3 \hx_2^2,
\end{split}
\end{eqnarray}
so that
\[
\hbP_{SK}^{(1)} = P_2(\hbK)\oplus\Span\{
r_0(\hx_1,\hx_2,\hx_3),~ r_1(\hx_1,\hx_2,\hx_3),~r_2(\hx_1,\hx_2,\hx_3),~r_3(\hx_1,\hx_2,\hx_3)\}. 
\]
Then the basis functions for Type 1 Smith-Kidger elements are
given as follows:}

\noindent for $(j,k,l)  = V_m, m = 1,\cdots, 8,$
\begin{eqnarray}\label{eq:phiV}
\begin{split}
&&\hphi^V_{(j,k,l)} (\hx_1,\hx_2,\hx_3) =\frac1{16}\left[-1+\hx_1^2+\hx_2^2+\hx_3^2)\right]+\frac{1}{8}\left[jk\hx_1\hx_2+jl\hx_1\hx_3+kl\hx_2\hx_3\right.
  \\
&&\qquad\left. +jklr_0(\hx_1,\hx_2,\hx_3) +
  r(\hx_1,\hx_2,\hx_3) \right],
\end{split}
\end{eqnarray}
and, for $(j,k,l) =M_m,m=1,\cdots,6,$
\begin{eqnarray}\label{eq:phiF}
\hphi^F_{(j,k,l)}(\hx_1,\hx_2,\hx_3
  )=\frac{1}{4}+\frac{1}{2}\ell(\hx_1,\hx_2,\hx_3) +\frac{1}{4}q(\hx_1,\hx_2,\hx_3)
  -\frac{1}{2} r(\hx_1,\hx_2,\hx_3),
\end{eqnarray}
where the linear, quadratic, and the remaining higher-order terms are defined by
\begin{subeqnarray}\label{eq:ell-q-r}
&&\ell(\hx_1,\hx_2,\hx_3)= j\hx_1 + k\hx_2 + l\hx_3,\\
 && q(\hx_1,\hx_2,\hx_3)
= -(\hx_1^2+\hx_2^2+\hx_3^2)+ 2 (j\hx_1^2 + k\hx_2^2+ l\hx_3^2),
\\
&&        
 r(\hx_1,\hx_2,\hx_3)= jr_1(\hx_1,\hx_2,\hx_3) + kr_2(\hx_1,\hx_2,\hx_3) + lr_3(\hx_1,\hx_2,\hx_3).
\end{subeqnarray}

Assume that $\O\in \R^3$ is a parallelepiped domain with boundary
$\Gamma$. Let $(\Tau_h)_{h>0}$ be a regular family of \cor{triangulations} of $\O$ into
parallelepipeds $\bK_j,j=1,2,\ldots,N_\bK$, where $h=\max_{\bK\in \Tau_h}h_\bK$ with
$h_\bK=\text{diam}(\bK)$. For each $\bK\in\Tau_h$, let $T_\bK: \hbK\rightarrow
\mathbb R^3$ be an invertible affine mapping such that
\[
\bK = T_\bK(\hbK),
\]
and set $\phi_{\bK} = \hphi\circ T_\bK^{-1}: \bK \rightarrow \mathbb R$ for
all $\hphi \in \hbP_{SK}^{(1)}$, whose collection will be designated by
\[
\bP_\bK = \Span\{\phi_{\bK}\mid\,\hphi\in\hbP_{SK}^{(1)} \}.
\]
Let $N_V$ and $N_F$ denote the numbers of vertices and faces, respectively. Then set
\begin{eqnarray*}
    && \V_h=\{V_1,V_2,\cdots,V_{N_V}:\quad \text{the set of all vertices of $\bK\in\Tau_h$}\},\\
    && \F_h=\{F_1,F_2,\cdots,F_{N_F}:\quad \text{the set of all faces of
    $\bK\in\Tau_h$}\},\\
    && \F_h^i=\{F_1,F_2,\cdots,F_{N_F^i}:\quad \text{the set of all interior faces of $\bK\in\Tau_h$}\},\\
    && \M_h=\{M_1,M_2,\cdots,M_{N_F}:\quad \text{the set of all face-centroids
    on $\F_h$}\},\\
    && \F_h^{(1)}=\{F\in\F_h:\quad \text{the set of all faces with outward
    normal $(\pm 1,0,0)$}\},\\
    && \F_h^{(2)}=\{F\in\F_h:\quad \text{the set of all faces with outward
    normal $(0,\pm 1,0)$}\},\\
    && \F_h^{(3)}=\{F\in\F_h:\quad \text{the set of all faces with outward
    normal $(0,0,\pm 1)$}\}.
\end{eqnarray*}
Obviously we have $\F_h=\F_h^{(1)}\cup\F_h^{(2)}\cup\F_h^{(3)}$.

We consider the following nonconforming finite element spaces:
\begin{eqnarray*}
    && \NC=\{\phi: \O\rightarrow \R |~ \phi|_\bK\in \bP_\bK \forall
  \bK\in\Tau_h, \phi \text{ is continuous
    at all } V_j\in \V_h, M_k\in \M_h \},\\
    && \NC_0=\{\phi\in\NC |~ \phi(V)=0\,\,\forall V_j\in \V_h\cap \Gamma\text{ and }\phi(M)=0\,\, \forall
 M_k\in \M_h\cap\Gamma\}.
\end{eqnarray*}

\section{The interpolation operator and convergence analysis}
In this section we will define an interpolation operator and analyze convergence in the case of
Dirichlet boundary value problems. The case of Neumann boundary value problem is quite similar and the
results will be omitted.

\subsection{The second order elliptic problem}

Denote by $(\cdot,\cdot)$ the $L^2(\O)$ inner product and \cor{by
  $\<\cdot,\cdot\>$} 
the duality pairing between $H^{-1}(\O)$ and $H^1_0(\O)$, which
is an extension of the duality paring between $L^2(\O)$ \cor{and itself}. By $\|\cdot\|_k$ and
$|\cdot|_k$ we adopt the standard notations for the norm and seminorm for the Sobolev spaces $H^k(\O)$.
Consider then the following Dirichlet boundary value problem:
\begin{subequations}\label{eq:elliptic}
\begin{eqnarray}
    -\Delta u &=&f, \quad \O, \\
    u&=&0,\quad \Gamma,
\end{eqnarray}
\end{subequations}
with  $f\in H^1(\O)$.
We will further assume that the coefficients
are sufficiently smooth and that the elliptic problem
\eqref{eq:elliptic} has an $H^3(\O)$-regular solution such that
$\|u\|_{3} \le C \|f\|_1$.
 The weak problem is then given as usual:
find $u\in H_0^1(\O)$ such that
\begin{equation}
a(u, v) = \<f,v\>, \quad v\in H^1_0(\O),
    \label{eq:weak}
\end{equation}
where $a: H^1_0(\O)\times H^1_0(\O) \rightarrow \R$  is the bilinear
form defined by $a(u,v) = (\nabla u,\nabla v)$ for all $u,v\in H^1_0(\O)$. The nonconforming \cor{Galerkin} method for Problem
\eqref{eq:elliptic} states as follows: find $u_h\in\NC_0$ such that
\begin{equation}
    a_h(u_h, v_h) = \<f, v_h\>,\quad v_h\in\NC_0,
    \label{eq:solution}
\end{equation}
where
$$
a_h(u, v) =\sum_{\bK\in \Tau_h}a_\bK(u,v),
$$
with $a_\bK$ being the restriction of $a$ to $\bK$.

Notice that in order to have point values defined properly we need to recall
the following Sobolev embedding theorem
\begin{eqnarray*}
	W^{m,p}(\O)\longrightarrow C^0(\bar{\O}),\ \text{if}\
    \frac{1}{p}-\frac{m}{d}<0.
\end{eqnarray*}
Thus we should have $p>\frac{3}{m}$. For a given cube $\bK\in \Tau_h$, define the local interpolation operator
$\Pi_\bK : W^{1,p}(\bK)\cap H^1_0(\O)\longrightarrow \hbP_{SK}^{(1)},~ p>3,$ by
$$
\Pi_\bK\phi(V_i)=\phi(V_i),\quad \Pi_\bK\phi(M_j)=\phi(M_j)
$$
for all vertices $V_i$ and face-centroids $M_j$ of $\bK$.
The global interpolation operator $\Pi_h$: $W^{1,p}(\O)\cap H^1_0(\O)\rightarrow\NC_0$
is then defined through the local interpolation operator $\Pi_\bK$ by $\Pi_h|_\bK=\Pi_\bK$ for all $\bK\in\Tau_h$. Since $\Pi_h$
preserves $P_2$ for all $\bK\in\Tau_h$, it follows from the Bramble-Hilbert Lemma that
\begin{eqnarray}\label{eq:hilbert}
    \begin{split}
\sum_{\bK\in\Tau_h}\|\phi-\Pi_h\phi\|_{0,\bK}+h\sum_{\bK\in\Tau_h}\|\phi-\Pi_h\phi\|_{1,\bK}\le
Ch^k\|\phi\|_k,
\\
\phi\in W^{k,p}(\O)\cap H_0^1(\O),1\le k\le 3.
\end{split}
\end{eqnarray}

We now consider the energy-norm error estimate and first consider the
following Strang lemma \cite{strang-fix}.
\begin{lemma}
    Let $u\in H^1(\O)$ and $u_h\in \NC_0$ be the solutions of Eq.
    \eqref{eq:weak} and Eq. \eqref{eq:solution}, respectively.
Then
\begin{eqnarray}\label{eq:strang}
    \|u-u_h\|_h\leq C\Big\{\inf_{v_h\in \NC_0}\|u-v_h\|_h+\sup_{w_h\in\NC_0}
    \frac{|a_h(u,w_h)-\< f,w_h\>|}{\|w_h\|_h}\Big\}.
\end{eqnarray}
    \label{lem:strang}
\end{lemma}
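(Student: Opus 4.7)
The plan is to follow the classical derivation of the second Strang lemma. First I would fix an arbitrary $v_h\in\NC_0$ and apply the triangle inequality
\[
\|u-u_h\|_h \le \|u-v_h\|_h + \|v_h-u_h\|_h,
\]
so that the task reduces to controlling the purely discrete quantity $\|v_h-u_h\|_h$ in terms of the approximation error and a consistency term.

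Next I would use the coercivity of the broken bilinear form on $\NC_0$. Since $\|w_h\|_h^2 = a_h(w_h,w_h)$ for every $w_h\in\NC_0$ (the broken $H^1$-seminorm equals $\sqrt{a_h(\cdot,\cdot)}$, and it is a genuine norm on $\NC_0$ because the DOFs at the eight vertices and six face-centroids are unisolvent and the homogeneous boundary values kill the constants via a discrete Poincaré inequality), I may write
\[
\|v_h-u_h\|_h^2 \le a_h(v_h-u_h,\,v_h-u_h).
\]
Choosing $w_h = v_h-u_h\in\NC_0$, inserting $\pm u$ inside the first argument, and using the discrete equation $a_h(u_h,w_h)=\<f,w_h\>$ yields
\[
a_h(v_h-u_h,w_h) = a_h(v_h-u,w_h) + \bigl(a_h(u,w_h)-\<f,w_h\>\bigr).
\]
The first term is bounded by $\|u-v_h\|_h\|w_h\|_h$ through element-wise Cauchy--Schwarz, while the second is exactly the consistency error. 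Dividing by $\|w_h\|_h$ and taking the supremum over $\NC_0$ gives
\[
\|v_h-u_h\|_h \le \|u-v_h\|_h + \sup_{w_h\in\NC_0}\frac{|a_h(u,w_h)-\<f,w_h\>|}{\|w_h\|_h}.
\]
Combining this with the triangle inequality and then taking the infimum over $v_h\in\NC_0$ produces the stated bound.

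The only genuinely nontrivial step is verifying that $\|\cdot\|_h$ is a norm (not merely a seminorm) on $\NC_0$, so that coercivity actually delivers control of $\|v_h-u_h\|_h$; everything else is the textbook Strang argument and introduces no new difficulty at this stage. The real work for the Smith--Kidger family is deferred to the consistency-error estimate $\sup_{w_h}|a_h(u,w_h)-\<f,w_h\>|/\|w_h\|_h$, which is where the failure of the classical patch test and the differences between Types 1, 2, 5 and Type 6 will eventually manifest themselves.
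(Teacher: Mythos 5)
Your proof is correct and is exactly the classical second Strang lemma argument that the paper itself does not reproduce but simply cites from Strang--Fix: triangle inequality, coercivity of $a_h$ on $\NC_0$ (where $\|\cdot\|_h$ is indeed a norm, by the unisolvence of the vertex and face-centroid DOFs together with the homogeneous boundary conditions), insertion of $\pm u$, and the discrete equation. No discrepancy with the paper's treatment.
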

Here, and in what follows, $\|\cdot\|_h$ denotes the usual broken energy norm
such that
$\|v\|_h = \sqrt{ a_h(v,v)}.$

Due to \eqref{eq:hilbert}, the first term in the right side
of \eqref{eq:strang} is bounded by
\begin{eqnarray}\label{eq:int_error}
    \inf_{v_h\in \NC_0}\|u-v_h\|_h\leq \|u-\Pi_h u\|_h\leq
    C h^s|u|_{s+1},~1<s\leq 2.
\end{eqnarray}

Denote by $f_{jk}$ the trace of $f\at{\bK_j}$ on $F_{jk}=\p\bK_j\cup\p\bK_k$
if it is nonempty. Similarly, 
the face $F_{jk}$ will designate the boundary of $K_j$ common with that of $K_k.$

Now let us bound the second term of the right side of \eqref{eq:strang} which
denotes the consistency error.
For a given cube $\bK\in \Tau_h$, denote by $F_K^{x_1+}$ and
$F_K^{x_1-}$ the face
of $\bK$ with outward normal $(1,0,0)$ and $(-1,0,0)$, respectively. Similarly,
we denote the other faces by
$F_K^{x_2+},F_K^{x_2-},F_K^{x_3+},$ and $F_K^{x_3-}$ so that
$\partial
\bK=\{F_K^{x_1+},F_K^{x_1-},F_K^{x_2+},F_K^{x_2-},F_K^{x_3+},F_K^{x_3-}\}$. Thus,
integrating by parts elementwise, we have
\begin{eqnarray}
    a_h(u,w_h)-\< f,w_h\>&=&\sum_{\bK\in\Tau_h}\<\frac{\p u}{\p\bnu},w_h\>_{\partial \bK} \nn \\
    &=&
    \sum_{\bK\in\Tau_h}\int_{F_K^{x_1+}\cup F_K^{x_1-}}\frac{\p u}{\p\bnu} w_h~\d
    s+\sum_{\bK\in\Tau_h}\int_{F_K^{x_2+}\cup F_K^{x_2-}}\frac{\p u}{\p\bnu}
    w_h~\d s  \nn \\
   &&+\sum_{\bK\in\Tau_h}\int_{F_K^{x_3+}\cup F_K^{x_3-}}\frac{\p u}{\p\bnu}
   w_h~\d s=: E_1+E_2+E_3, \label{eq:ExEyEz}
\end{eqnarray}
where $\bnu$ is the unit outward normal to $\bK$.

Before proceeding, we need the following lemma.
\begin{lemma}\label{lem:orth}
    By $F_k$ denote the face containing the centroid $M_k$ and by
    $V_j^{F_k},j=1,2,3,4,$ denote the vertices on the \cor{face} $F_k$. If
    $p\in\hbP_{SK}^{(1)},$  $p(V_j^{F_k})=0,j=1,2,3,4,$ and $p(M_k)=0$, then
    \begin{eqnarray}\label{eq:orth_property1}
    \int_{F_k}p(\x,\y,\z)\d s=0,\quad
    k=1,2,\ldots,6.
    \end{eqnarray}
\end{lemma}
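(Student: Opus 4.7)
The strategy is to reduce to a single canonical face by symmetry, compute the trace polynomial space there, and show that the five vanishing conditions cut out a two-dimensional subspace whose elements each have zero face average. Because $\hbP_{SK}^{(1)}$ is invariant under every reflection $\hx_i\mapsto -\hx_i$ and under the cyclic permutation $(\hx_1,\hx_2,\hx_3)\mapsto(\hx_2,\hx_3,\hx_1)$, all six faces are equivalent, so I may take $F_k=\{\hx_3=1\}\cap\hbK$ with corner vertices $(\pm 1,\pm 1,1)$ and centroid $(0,0,1)$. Substituting $\hx_3=1$ into the fourteen generators of $\hbP_{SK}^{(1)}$ shows that the trace space is
\[
\hbP_{SK}^{(1)}|_{F_k}=\Span\{1,\hx_1,\hx_2,\hx_1^2,\hx_2^2,\hx_1\hx_2,\hx_1^2\hx_2\},
\]
which is seven-dimensional (only the cubic $\hx_1^2\hx_2$ contributes a genuinely new monomial to the trace; $\hx_1\hx_2\hx_3$, $\hx_2^2\hx_3$, and $\hx_3^2\hx_1$ all collapse into $P_2$-traces). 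In particular, the five node values do not already pin down $p|_{F_k}$, so the lemma has genuine content.

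\textbf{Main computation.} I would write
\[
p(\hx_1,\hx_2,1)=b_0+b_1\hx_1+b_2\hx_2+b_3\hx_1^2+b_4\hx_2^2+b_5\hx_1\hx_2+b_6\hx_1^2\hx_2
\]
and translate the five hypotheses into a $5\times 7$ linear system. The centroid equation gives $b_0=0$, and then suitable sums and differences of the four corner equations, sorted by parity in $\hx_1$ and $\hx_2$, yield $b_1=b_5=0$, $b_3+b_4=0$, and $b_2+b_6=0$. Hence the admissible traces form the two-dimensional subspace
\[
\Span\{\hx_1^2-\hx_2^2,\ \hx_2(1-\hx_1^2)\}.
\]
Both generators integrate to zero on $F_k=[-1,1]^2$: the first by the $\hx_1\leftrightarrow\hx_2$ symmetry of the square, and the second because its integrand is odd in $\hx_2$. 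By linearity $\int_{F_k} p\,\d s=0$, which is the claim.

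\textbf{Main obstacle.} The only non-routine step is the initial bookkeeping that identifies the seven-dimensional trace space and verifies that only one of the four cubic enrichments is effectively ``seen'' on each face; once this is done, the linear algebra and the parity check are immediate. It is worth remarking that the same method applied to the Type~6 space produces a two-dimensional null space containing a function with \emph{nonzero} face average, which is the algebraic mechanism behind the one-order accuracy loss reported later in the paper for Type~6.
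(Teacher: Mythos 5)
Your proof is correct and follows essentially the same route as the paper's: reduce to a single face by symmetry, identify the seven-dimensional trace space $\hbP_{SK}^{(1)}|_{F_k}$, show the five nodal conditions cut out a two-dimensional subspace (spanned by $\hx_1^2-\hx_2^2$ and $\hx_2(1-\hx_1^2)$ in your labeling), and verify that both generators have zero face integral. The paper merely organizes the linear algebra differently --- it first factors the corner-vanishing conditions as $l_1(\y^2-1)+l_2(\z^2-1)$ with $l_j\in P_1$ and then intersects with the trace space to kill the extraneous coefficients --- but it lands on the same null space, $\Span\{\z(\y^2-1),\,\y^2-\z^2\}$, up to relabeling of coordinates.
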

\begin{proof}
 Without loss of generality, we assume that $M_1=(1,0,0)$. In this case,
 we have $p\in\hbP_{SK}^{(1)}$ and $p(1,\pm 1,\pm 1)= p(1,0,0)=0.$ We need to prove that
\begin{eqnarray}\label{eq:orth_property}
    \int_{F_1}p(1,\y,\z)\,\d \y\d\z=0.
\end{eqnarray}
It follows from $p(1,\pm 1,\pm 1)=0$ that
\begin{eqnarray}
    p(1,\y,\z)=l_1(\y,\z)(\y^2-1)+l_2(\y,\z)(\z^2-1), \quad l_j\in P_1(\mathbb R^2),~j=1,2.
    \label{eq:orth}
\end{eqnarray}
Set
$$
l_j(\y,\z)=a_j\y+b_j\z+c_j,\quad j=1,2.
$$
Then $p(1,0,0)=0$ implies that $c_1+c_2=0,$ which reduces \eqref{eq:orth} to
\begin{eqnarray*}
    p(1,\y,\z)=(a_1\y+b_1\z)(\y^2-1)+(a_2\y+b_2\z)(\z^2-1)+c_1(\y^2-\z^2).
\end{eqnarray*}
Since
\begin{equation}
    \hbP_{SK}^{(1)}|_{\x=1}=\Span\{1,\y,\z,\y^2,\y\z,\z^2,\y^2\z\},
    \label{eq:spanx}
\end{equation}
invoking $p\in \hbP_{SK}^{(1)}$, we have
\begin{eqnarray*}
    a_1=a_2=b_2=0,
\end{eqnarray*}
which leads to
\begin{eqnarray}\label{eq:poly_form}
    p(1,\y,\z)=b_1\z(\y^2-1)+c_1(\y^2-\z^2).
\end{eqnarray}
It follows from \eqref{eq:poly_form} that
\eqref{eq:orth_property1} holds. This completes the proof.
\end{proof}
This lemma implies that Type 1 element can pass lower order patch
test (test functions are in $P_0$ not $P_1$), which will lead to a
convergence solution for the second order elliptic problems, but the
convergence order is not optimal. To bound $E_1,E_2,E_3$, we also
need some interpolation operators.

\subsection{Some interpolation and projection operators}
For the reference element $\hbK=[-1,1]\times [-1,1]\times[-1,1]$, consider
the interpolation problem on the face of $F_{\hbK}^{x_1+}$: the interpolation
points are $(1,1,1)$, $(1,1,-1)$, $(1,-1,1)$, $(1,-1,-1)$, and $(1,0,0)$, which are the four
vertices and the centroid of $F_{\hbK}^{x_1+},$ with the interpolation space
${\hat Q}_1^*( F_{\hbK}^{x_1+} ),$ where 
\begin{equation*}
{\hat Q}_1^*( F_{\hbK}^{x_1+} ):=\Span\{1,\hx_2,\hx_3,\hx_2\hx_3,\hx_3^2\} \subset
\hbP_{SK}^{(1)}|_{\hx_1=1}
\end{equation*}
is an enriched bilinear polynomial space on the face
$F_{\hbK}^{x_1+}$ (see \eqref{eq:spanx}).

The above interpolation problem has a solution by using the bubble function
\begin{eqnarray*}
b(\hx_2,\hx_3)=1-\hx_3^2,
\end{eqnarray*}
and the standard bilinear interpolation basis functions
\begin{eqnarray*}
 &&   q_1(\hx_2,\hx_3) =\frac14 (1+\hx_2)( 1 + \hx_3),\quad
    q_2(\hx_2,\hx_3)=\frac14(1 - \hx_2)( 1+ \hx_3) ,\\
&& q_3(\hx_2,\hx_3)= \frac14 (1 - \hx_2)(1 - \hx_3),\quad
    q_4(\hx_2,\hx_3)= \frac14 (1+ \hx_2)( 1- \hx_3),\\
\end{eqnarray*}
as follows:
\begin{eqnarray*}
\hvarphi_j =  q_j - \frac14b,\, j =1,\cdots,4; \quad 
\hvarphi_5=b.
\end{eqnarray*}
Thus for a \cor{continuous} function $f$ defined on the face
$F_{\hbK}^{x_1+}$, the interpolation
polynomial is given by
\begin{eqnarray}\label{eq:hsIF}
\hsI_F^{x_1+}f=f(1,1,1)\hvarphi_1+f(1,-1,1)\hvarphi_2+f(1,-1,-1)\hvarphi_3+f(1,1,-1)\hvarphi_4+f(1,0,0)\hvarphi_5.
\end{eqnarray}
And then we can also define the interpolation operator on
the opposite face with the same space and denote it by
$\hsI_F^{x_1-}$. Similarly, 
define the interpolation operators on the other
faces of $\hbK$ with the corresponding spaces:
\begin{eqnarray*}
&&{\hat Q}_1^*( F_{\hbK}^{x_2} ):=\Span\{1,\hx_3,\hx_1,\hx_3\hx_1,\hx_1^2\} \subset
\hbP_{SK}^{(1)}|_{\hx_2=\pm 1},\\
&&{\hat Q}_1^*( F_{\hbK}^{x_3} ):=\Span\{1,\hx_1,\hx_2,\hx_1\hx_2,\hx_2^2\} \subset
\hbP_{SK}^{(1)}|_{\hx_3=\pm 1}
\end{eqnarray*}
 and denote them by
$\hsI_F^{x_2+},\hsI_F^{x_2-},\hsI_F^{x_3+},\hsI_F^{x_3-}$, respectively.

For a given $\bK\in\Tau_h$, we can define the interpolation operator
$\sI_F^{x_i\pm}$ by $\sI_F^{x_i\pm}=\hsI_F^{x_i\pm}\circ
T_\bK^{-1}$.
Notice that $\jump{\sI_F^{x_i+}w_h}{F}= 0$ for all interior faces $F$ for every $w_h\in \NC_0.$
Moreover, the above interpolation operators
preserve linear polynomials on each surface as stated in the following lemma.
\begin{lemma}\label{lem:sIF}
$\sI_F^{x_i\pm}$ 
map $w_h\in \NC_0$ such that their images across interior faces are continuous
for all interior faces $F$. Moreover, they
preserve bilinear polynomials on faces. 
\end{lemma}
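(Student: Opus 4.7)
The plan is to establish the two assertions separately, both as direct consequences of the explicit unisolvence of $\hsI_F^{x_i\pm}$ on $\hat Q_1^*$.

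For continuity across interior faces, I would fix an interior face $F\in\F_h^i$ shared by two cubes $\bK_+$ and $\bK_-$, with four vertices $V_1,\ldots,V_4$ and centroid $M$. The restriction of $\sI_F^{x_i+}w_h$ (respectively $\sI_F^{x_i-}w_h$) to $F$ lies in the affine image of $\hat Q_1^*$ and is entirely determined by the five point values $w_h|_{\bK_+}(V_j)$, $j=1,\cdots,4$, and $w_h|_{\bK_+}(M)$ (respectively by the analogous values from $\bK_-$). Since the definition of $\NC_0$ enforces continuity at every element of $\V_h$ and $\M_h$, the five data agree from both sides, so the two interpolants coincide on $F$ as elements of the same finite-dimensional space.

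For preservation of bilinear polynomials, I would first verify that $\hvarphi_1,\ldots,\hvarphi_5$ form the nodal basis of $\hat Q_1^*(F_{\hbK}^{x_1+})$ dual to the five DOFs. A direct substitution using $q_j(V_i)=\delta_{ij}$, $b(V_i)=0$, $q_j(M)=\tfrac14$, and $b(M)=1$ shows that $\hvarphi_i$ equals $1$ at the $i$th interpolation point and $0$ at the others. Consequently $\hsI_F^{x_1+}$ is the identity on $\hat Q_1^*(F_{\hbK}^{x_1+})$. The bilinear polynomials on the face, namely $\Span\{1,\hx_2,\hx_3,\hx_2\hx_3\}$, form a four-dimensional subspace of $\hat Q_1^*(F_{\hbK}^{x_1+})$ and are therefore reproduced exactly by $\hsI_F^{x_1+}$. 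Pulling back through $T_\bK$ gives the analogous statement for $\sI_F^{x_1\pm}$ on the faces of $\bK$, and identical reasoning covers $\sI_F^{x_2\pm}$ and $\sI_F^{x_3\pm}$ using the corresponding spaces $\hat Q_1^*(F_{\hbK}^{x_2})$ and $\hat Q_1^*(F_{\hbK}^{x_3})$ already introduced.

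The only step involving any computation is the verification of the duality relations $\hvarphi_i(\text{$j$th node})=\delta_{ij}$; once this is in hand both claims follow immediately, so I do not expect a genuine obstacle. The only minor subtlety is that on non-rectangular parallelepipeds the phrase ``bilinear polynomial on a face'' must be interpreted through the affine map $T_\bK$, which is standard and causes no difficulty since $T_\bK$ is an isomorphism of the face polynomial spaces.
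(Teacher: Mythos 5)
Your proof is correct. The paper actually states Lemma~\ref{lem:sIF} without any proof, treating it as an immediate consequence of the remark preceding it (that $\jump{\sI_F^{x_i+}w_h}{F}=0$ on interior faces since $w_h\in\NC_0$ is continuous at all vertices and face-centroids); your argument supplies exactly the routine verification the paper omits, namely the duality relations $\hvarphi_i(\text{node}_j)=\delta_{ij}$ giving reproduction of $\hat Q_1^*\supset\Span\{1,\hx_2,\hx_3,\hx_2\hx_3\}$, and the agreement of the five interpolation data from either side of an interior face. Your closing caveat about interpreting ``bilinear on a face'' through $T_\bK$ is apt, with the one remark that $\hat Q_1^*$ is not invariant under arbitrary affine maps of the face (because it contains $\hx_3^2$ but not $\hx_2^2$), so the continuity claim implicitly uses the compatible, axis-aligned parameterizations of adjacent parallelepipeds that the paper assumes throughout (cf.\ its decomposition $\F_h=\F_h^{(1)}\cup\F_h^{(2)}\cup\F_h^{(3)}$).
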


Moreover, the above interpolation has the following interesting property:
\begin{lemma}\label{lem:sIF-wh}
For all $w_h\in \NC_0$ and $\bK\in\Tau_h,$ 
\begin{eqnarray}\label{eq:Fx}
w_h|_{F_K^{x_i+}}-\sI_F^{x_i+}(w_h\at{\bK})=w_h|_{F_K^{x_i-}}-\sI_F^{x_i-}(w_h\at{\bK}),
\quad i=1,2,3.
\end{eqnarray}
\end{lemma}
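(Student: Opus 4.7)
My plan is to reduce the identity \eqref{eq:Fx} to a parity statement on the reference cube $\hbK$. Since $\sI_F^{x_i\pm} = \hsI_F^{x_i\pm}\circ T_\bK^{-1}$ and the affine map $T_\bK$ sends the opposite reference faces $F_{\hbK}^{x_i\pm}$ to $F_\bK^{x_i\pm}$ with a common tangential parametrization, it suffices to prove, with $\hat w := w_h\at{\bK}\circ T_\bK \in \hbP_{SK}^{(1)}$ and $i=1$ (the cases $i=2,3$ then follow by renaming axes), that
\begin{equation*}
\hat w(1,\hx_2,\hx_3)-\hsI_F^{x_1+}\hat w=\hat w(-1,\hx_2,\hx_3)-\hsI_F^{x_1-}\hat w
\end{equation*}
as polynomials in $(\hx_2,\hx_3)$.

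The first key observation is that, viewed in tangential coordinates $(\hx_2,\hx_3)$, the reference operators $\hsI_F^{x_1+}$ and $\hsI_F^{x_1-}$ are \emph{literally the same} linear projection onto $\hat Q_1^*(F_{\hbK}^{x_1+}) = \Span\{1,\hx_2,\hx_3,\hx_2\hx_3,\hx_3^2\}$: both use the tangential nodes $(\pm 1,\pm 1)$ and $(0,0)$ and the same basis $\hvarphi_1,\ldots,\hvarphi_5$ from \eqref{eq:hsIF}. Setting
\[
J(\hx_2,\hx_3) := \hat w(1,\hx_2,\hx_3) - \hat w(-1,\hx_2,\hx_3),
\]
the whole identity is therefore equivalent to the single claim $J\in\hat Q_1^*(F_{\hbK}^{x_1+})$: if this inclusion holds, the reproducing property yields $\hsI_F^{x_1+}J=J$, whence by linearity $\hsI_F^{x_1+}\hat w(1,\cdot)-\hsI_F^{x_1-}\hat w(-1,\cdot)=J$, which rearranges to the desired equation.

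To verify $J\in\hat Q_1^*(F_{\hbK}^{x_1+})$, I would expand $\hat w$ in the 14-monomial basis $P_2\cup\{r_0,r_1,r_2,r_3\}$ and note that only the basis elements odd in $\hx_1$ contribute to $J$. By \eqref{eq:r} these are precisely $\{\hx_1,\hx_1\hx_2,\hx_1\hx_3,r_0,r_1\}$, whose traces at $\hx_1=1$ are $\{1,\hx_2,\hx_3,\hx_2\hx_3,\hx_3^2\}$: exactly the spanning set of $\hat Q_1^*(F_{\hbK}^{x_1+})$. For $i=2$ the analogous $\hx_2$-odd monomials $\{\hx_2,\hx_1\hx_2,\hx_2\hx_3,r_0,r_2\}$ restrict at $\hx_2=1$ to $\{1,\hx_1,\hx_3,\hx_1\hx_3,\hx_1^2\}$, the generators of $\hat Q_1^*(F_{\hbK}^{x_2})$; for $i=3$ the $\hx_3$-odd monomials $\{\hx_3,\hx_1\hx_3,\hx_2\hx_3,r_0,r_3\}$ restrict to the generators of $\hat Q_1^*(F_{\hbK}^{x_3})$.

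The only real work is this parity bookkeeping, which I expect to be the sole, mild obstacle; there is no analytic content. The identity reflects a design feature of the Type 1 Smith--Kidger space: the extra cubic monomials $r_1,r_2,r_3$ and the quadratic bubble generator of $\hat Q_1^*$ are matched so that the $\hx_i$-odd half of $\hbP_{SK}^{(1)}$ restricts exactly onto the face interpolation space, which is precisely the weak continuity property encoded by \eqref{eq:Fx}.
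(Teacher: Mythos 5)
Your proof is correct and follows essentially the same route as the paper: reduce to the reference cube, use that $\hsI_F^{x_1+}$ and $\hsI_F^{x_1-}$ are the same tangential projection reproducing ${\hat Q}_1^*$, and check the monomials of $\hbP_{SK}^{(1)}$. Your packaging via the jump $J$ and $\hx_1$-parity is a slightly cleaner organization of the paper's own monomial-by-monomial verification (the paper instead splits $\hbP_{SK}^{(1)}|_{\hx_1=1}$ into ${\hat Q}_1^*$ plus $\Span\{\hx_2^2,\hx_2^2\hx_3\}$, the latter being $\hx_1$-independent), but the substance is identical.
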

\begin{proof}
	We only prove the case of $i=1$ in Eq. \eqref{eq:Fx}
	which suffices to prove the statement on the reference element $\hbK$:
\begin{eqnarray}    \label{eq:Fhx}
    \hat{w}_h|_{\hat{F}_{\hbK}^{x_1+}}-\hsI_{\hat{F}}^{x_1+}(\hat{w}_h)=\hat{w}_h|_{\hat{F}_{\hbK}^{x_1-}}-\hsI_{\hat{F}}^{x_1-}(\hat{w}_h)\quad\forall
    \hat{w}_h\in \hbP_{SK}^{(1)}.
\end{eqnarray}
Due to the interpolation property, 
$\hat{w}_h|_{\hat{F}_{\hbK}^{x_1+}}-\hsI_{\hat{F}}^{x_1+}(\hat{w}_h)=0$
 for all  $\hat{w}_h\in
{\hat Q}_1^*( F_{\hbK}^{x_1+}),$ and the same is true if
$x_1^+$ is replaced by $x_1^-.$
Thus, it suffices to show that \eqref{eq:Fx} holds for all  $\hat{w}_h\in
\hbP_{SK}^{(1)}|_{\hx_1=1} \setminus {\hat Q}_1^*(
F_{\hbK}^{x_1+}),$ which
is nothing but $\Span\{\hx_2^2, \hx_2^2 \hx_3\}.$ Since both 
$\hx_2^2$ and $\hx_2^2 \hx_3$ are independent of $\hx_1$, it is obvious that
\eqref{eq:Fhx} holds for each of them.
This proves the lemma. 
\end{proof}

Define
$RQ=\Span\{1,\hx_1,\hx_2,\hx_3,\hx_1^2-\hx_2^2,\hx_1^2-\hx_3^2\}$.
For the reference element $\hbK$, let $R_{\hbK}: H^2(\hbK)\rightarrow RQ$ be an
interpolation operator defined by
\begin{eqnarray*}
    R_{\hbK}\hphi (\hat{M}_j)=\hphi (\hat{M}_j), j=1,\ldots,6
\end{eqnarray*}
for all $\hphi\in H^2(\hbK)$. It is exactly the so-called rotation element.
Obviously,
\begin{eqnarray*}
    R_{\hbK}\hphi =\sum_{i=1}^6\hphi (\hM_i)\hpsi_i (\hx,\y,\z)
\end{eqnarray*}
where
\begin{eqnarray*}
	\begin{cases}
	\hpsi_i=\frac{1}{6}\left(1+3\hx_i+\sum_{1\leq j\leq
	3,j\neq i} (\hx_i^2-\hx_j^2) \right),\\
	\hpsi_{7-i}=\frac{1}{6}\left(1-3\hx_i+\sum_{1\leq j\leq
	3,j\neq i} (\hx_i^2-\hx_j^2) \right),
\end{cases} i=1,2,3.
\end{eqnarray*}
It is easy to notice that for $i=1,2,\ldots,6$
\begin{eqnarray*}
	&&\hpsi_i\at{\hx_j=1}=\hpsi_i\at{\hx_j=-1}, \quad
	\text{if $j\neq i$ and $j\neq 7-i$}
\end{eqnarray*}
and for $i=1,2,3$
\begin{eqnarray*}
	&&\hpsi_i\at{\hx_i=1}=\hpsi_{7-i}\at{\hx_i=-1}=1-\frac{1}{6}\sum_{1\leq j\leq
	3,j\neq i}\hx_j^2,  \\
	&&\hpsi_i\at{\hx_i=-1}=\hpsi_{7-i}\at{\hx_i=1}=-\frac{1}{6}\sum_{1\leq j\leq
	3,j\neq i}\hx_j^2. 
\end{eqnarray*}

Thus we have
\begin{eqnarray*}
    R_{\hbK}\hphi|_{\hx_j=1} &=&\sum_{i=1}^6\hphi
    (\hM_i)\hpsi_i\at{x_j=1}\\
    &=&\sum_{1\leq i\leq 6,i\neq j,i\neq 7-j}\hphi
    (\hM_i)\hpsi_i\at{x_j=1}+\hphi(\hM_j)
    \left(1-\frac{1}{6}\sum_{1\leq i\leq
	3,i\neq j}\hx_i^2\right)\\
    &&-\hphi(\hM_{7-j})\left(\frac{1}{6}\sum_{1\leq i\leq
	3,i\neq j}\hx_i^2\right)\\
    &:=&
    \Theta(\hbK,\hphi,\{\hx_1,\hx_2,\hx_3\}\setminus\hx_j)+\hphi(\hM_j),
\end{eqnarray*}
and
\begin{eqnarray*}
    R_{\hbK}\hphi|_{\hx_j=-1} &=&\sum_{i=1}^6\hphi
    (\hM_i)\hpsi_i\at{x_j=-1}   \\
    &=&\sum_{1\leq i\leq 6,i\neq j,i\neq 7-j}\hphi
    (\hM_i)\hpsi_i\at{x_j=1}-\hphi(\hM_j)
    \left(\frac{1}{6}\sum_{1\leq i\leq
	3,i\neq j}\hx_i^2\right)\\
    &&-\hphi(\hM_{7-j})\left(1-\frac{1}{6}\sum_{1\leq i\leq
	3,i\neq j}\hx_i^2\right)\\
    &=&
    \Theta(\hbK,\hphi,\{\hx_1,\hx_2,\hx_3\}\setminus\hx_j)+\hphi(\hM_{7-j}),
\end{eqnarray*}
For any given $\bK\in\Tau_h$, we can define the interpolation operator
$R_{\bK}:=R_{\hbK}\cdot T_K^{-1}$. Denote by $M_K^{x_j+}$
and $M_K^{x_j-}$ the
centroids of the faces $F_K^{x_j+}$ and $F_K^{x_j-}$,
respectively. Then for any $\phi\in H^2(\bK)$, we have
\begin{eqnarray*}
	&&R_{\bK}\phi|_{F_K^{x_j+}}=\Theta(\bK,\phi,\{x_1,x_2,x_3\}\setminus
	x_j)+\phi(M_K^{x_j+}),\\
    &&R_{\bK}\phi|_{F_K^{x_j-}}=\Theta(\bK,\phi,\{x_1,x_2,x_3\}\setminus
	x_j)+\phi(M_K^{x_j-}).
\end{eqnarray*}


\subsection{The error estimates}
Turn to bound $|E_1| + |E_2| + |E_3|$ in \eqref{eq:ExEyEz}.
Below, we will give an estimate of $|E_1|$ in detail while 
similar estimates of $|E_2|$ and $|E_3|$ will be omitted.

It is easy to notice that for any $F\in \partial
\bK'\cap\partial\bK''\cap \F_h^{(1)}\neq \emptyset $ and $w\in\NC_0$, we have
\begin{eqnarray*}
	&&\int_F \nabla u \big(w|_{\bK'}-w|_{\bK''}\big)\d s\\
	&=& \int_F \nabla u
	\big((w|_{\bK'}-\sI_F^{x_1}(w))-(w|_{\bK''}-\sI_F^{x_1}(w))\big)\d s\\
	&=& \int_F \big(\nabla u-M_F(\nabla u)\big)
	\big((w|_{\bK'}-\sI_F^{x_1}(w))-(w|_{\bK''}-\sI_F^{x_1}(w))\big)\d s
\end{eqnarray*}
where $M_F(\nabla u)$ denotes the value of
$\nabla u$ at the centroid of $F$. The second equality holds
due to the orthogonality \eqref{eq:orth_property1}.
Hence we have
\begin{eqnarray*}
	E_1 &=&\sum_{\bK\in\Tau_h}\sum_{i=1}^3\int_{F_K^{x_1+}\cup
	F_K^{x_1-}}\frac{\partial u}{\partial x_i}w\nu_i\d s\\ 
      &=&
      \sum_{\bK\in\Tau_h}\sum_{i=1}^3\left(\int_{F_K^{x_1+}}\frac{\partial u}{\partial x_i}
    (w-\sI_F^{x_1+}(w))\nu_i\d s +\int_{F_K^{x_1-}}\frac{\partial u}{\partial x_i}
    (w-\sI_F^{x_1-}(w))\nu_i\d s\right)\\
    &=&
    \sum_{\bK\in\Tau_h}\sum_{i=1}^3\left(\int_{F_K^{x_1+}}\left(\frac{\partial
    u}{\partial x_i}-M_{F_K^{x_1+}}(\frac{\partial u}{\partial x_i})\right)
    (w-\sI_F^{x_1+}(w))\nu_i\d s\right.\\
    &&\left.+\int_{F_K^{x_1-}}\left(\frac{\partial
    u}{\partial x_i}-M_{F_K^{x_1-}}(\frac{\partial u}{\partial x_i})\right)
    (w-\sI_F^{x_1-}(w))\nu_i\d s\right),
\end{eqnarray*}
where $\bnu=(\nu_1,\nu_2,\nu_3)^T$ is the outward normal derivative of
$F$.
Thus due to \eqref{eq:Fx}, we arrive at
\begin{eqnarray*}
	E_1 &=&
	\sum_{\bK\in\Tau_h}\sum_{i=1}^3\left(\int_{F_K^{x_1+}}\left(\frac{\partial u}{\partial x_i}
    -\Theta(\bK,\frac{\partial u}{\partial
    x_i},x_2,x_3)-M_{F_K^{x_1+}}(\frac{\partial u}{\partial x_i})\right)
    (w-\sI_F^{x_1+}(w))\nu_i\d s\right.\\
    &&\left.+\int_{F_K^{x_1-}}\left(\frac{\partial u}{\partial x_i}
    -\Theta(\bK,\frac{\partial u}{\partial
    x_i},x_2,x_3)-M_{F_K^{x_1-}}(\frac{\partial u}{\partial x_i})\right)
    (w-\sI_F^{x_1-}(w))\nu_i\d s\right)\\
    &=& \sum _{\bK\in\Tau_h}\sum_{i=1}^3\left(\int_{F_K^{x_1+}}\left(\frac{\partial u}{\partial x_i}
    -R_{\bK}\frac{\partial u}{\partial x_i}\right)
    (w-\sI_F^{x_1+}(w))\nu_i\d s\right.\\
    &&\left.+\int_{F_K^{x_1-}}\left(\frac{\partial u}{\partial x_i}-R_{\bK}\frac{\partial u}{\partial x_i}\right)
    (w-\sI_F^{x_1-}(w))\nu_i\d s\right).
\end{eqnarray*}
Since $R_{\bK}$ and $\sI_F^{x}$ preserves $P_1(\bK)$ and
$P_1(F_K^{x_1})$,
respectively, it
follows from trace theorem and Cauchy-Schwartz inequality, we get
\begin{eqnarray*}
    |E_1|\leq Ch^2||w||_h||u||_{H^3(\O)}.
\end{eqnarray*}
Similarly, we also have
\begin{eqnarray*}
    |E_2|\leq Ch^2||w||_h||u||_{H^3(\O)},\ |E_3|\leq Ch^2||w||_h||u||_{H^3(\O)}.
\end{eqnarray*}
Hence
\begin{eqnarray*}
    \sup_{w\in\NC_0}\frac{|a_h(u,w)-\langle f,w\rangle|}{\|w\|_h}=\sup_{w\in\NC_0}
    \frac{|E_1+E_2+E_3|}{||w||_h}\leq Ch^2 ||u||_{H^3(\O)}.
\end{eqnarray*}
By collecting the above results, we get the following energy-norm error estimate.

\begin{thm}\label{thm:h1}
    Let $u\in H^3(\O)\cap H_0^1(\O)$ and $u_h\in \NC_0$ satisfy
    \eqref{eq:weak} and \eqref{eq:solution}, respectively. Then we have the energy norm
    error estimate:
    \begin{eqnarray*}
        ||u-u_h||_h\leq Ch^2||u||_3.
    \end{eqnarray*}
\end{thm}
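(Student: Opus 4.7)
The plan is to use Strang's Lemma (Lemma~\ref{lem:strang}) to split the error into an interpolation error and a consistency error. The interpolation error is already controlled by \eqref{eq:int_error} with $s=2$ via Bramble-Hilbert, yielding the desired $O(h^2)$ bound. So the entire work is to show that the consistency error is also $O(h^2)$, \emph{despite} the failure of the full $P_1$ patch test (Lemma~\ref{lem:orth} only gives orthogonality against $P_0$ on each face).

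After elementwise integration by parts we decompose the consistency error as $E_1+E_2+E_3$ following \eqref{eq:ExEyEz}. By symmetry it suffices to estimate $|E_1|$. The first step is, on each face in $\F_h^{(1)}$, to subtract off the face-centroid value $M_F(\nabla u)$ from $\nabla u$: the orthogonality relation \eqref{eq:orth_property1} together with the fact that $\jump{\sI_F^{x_1+} w}{F}=0$ across interior faces justifies this, because $w-\sI_F^{x_1\pm}(w)$ vanishes at the four face-vertices and the centroid. This manoeuvre already contributes an $h$-factor on the $\nabla u$ side.

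The second, more delicate, step is to gain one more order of approximation on $\nabla u$. For a fixed $\bK$, I want to subtract $R_{\bK}(\partial u/\partial x_i)$ from $\partial u/\partial x_i$ on both faces $F_K^{x_1\pm}$, rather than just the centroid value. The formula
\[
R_{\bK}\phi|_{F_K^{x_1\pm}} = \Theta(\bK,\phi,\{x_2,x_3\}) + \phi(M_K^{x_1\pm})
\]
shows that the only discrepancy between $R_{\bK}\phi$ and $\phi(M_K^{x_1\pm})$ on opposite faces is the common term $\Theta$, which is independent of $x_1$. Hence Lemma~\ref{lem:sIF-wh} guarantees that $(w-\sI_F^{x_1+}(w))|_{F_K^{x_1+}}$ and $(w-\sI_F^{x_1-}(w))|_{F_K^{x_1-}}$ agree as functions of $(x_2,x_3)$, so that with the opposite outward normals on $F_K^{x_1+}$ and $F_K^{x_1-}$ the contribution of $\Theta$ cancels between the two face integrals. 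This is the key algebraic cancellation that upgrades the partial patch test to an $O(h^2)$ consistency estimate.

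Once the integrand on each face is rewritten as $(\partial u/\partial x_i - R_{\bK}\partial u/\partial x_i)(w-\sI_F^{x_1\pm}(w))\nu_i$, the rest is routine: $R_{\bK}$ preserves $P_1(\bK)$ and $\sI_F^{x_1\pm}$ preserves $P_1$ on each face, so the trace theorem plus Bramble-Hilbert on $\hbK$ gives an $O(h^{3/2})$ factor from $\nabla u - R_{\bK}\nabla u$ (in $L^2(F)$) and an $O(h^{1/2})$ factor from $w-\sI_F^{x_1\pm}(w)$ (in $L^2(F)$), with the $w$-factor absorbing into $\|w\|_h$ after a standard scaling argument. The Cauchy-Schwarz inequality over faces and summation over elements then yield $|E_1|\le Ch^2\|w\|_h\|u\|_3$. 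Treating $E_2,E_3$ identically and combining via \eqref{eq:strang} gives the theorem. The main obstacle is the cancellation argument in the third paragraph: it is precisely the fact that the Type 1 element only passes the lower-order patch test that forces us to exploit the pairing of opposite faces together with the structure of $R_{\bK}$, rather than a purely element-by-element argument.
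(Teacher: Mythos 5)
Your proposal is correct and follows essentially the same route as the paper: Strang's lemma, the $P_0$-orthogonality of Lemma~\ref{lem:orth} to subtract the face-centroid value of $\nabla u$, and then the key cancellation of the $\Theta$-term between opposite faces via Lemma~\ref{lem:sIF-wh} so that $\nabla u - R_{\bK}\nabla u$ appears, after which the $P_1$-preservation of $R_{\bK}$ and $\sI_F^{x_i\pm}$ gives the $O(h^2)$ consistency bound. This matches the paper's argument step for step.
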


By \cor{a} standard Aubin-Nitsche duality argument, \cor{an} $L_2(\O)$-error
estimate can be easily obtained.
\begin{thm}\label{thm:l2}
    Let $u\in H^{3}(\O)\cap H_0^1(\O)$ and $u_h\in\NC_0$ be the solution of
    \eqref{eq:weak} and \eqref{eq:solution}, respectively. Then we have
    \begin{eqnarray*}
        \|u-u_h\|_{0}\leq Ch^{3} \|u\|_3.
    \end{eqnarray*}
\end{thm}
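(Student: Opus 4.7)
The approach is the standard Aubin--Nitsche duality argument adapted to the nonconforming setting. Let $w\in H^2(\O)\cap H_0^1(\O)$ solve the dual problem $-\Delta w=u-u_h$ in $\O$ with $w=0$ on $\Gamma$; elliptic regularity on the parallelepiped $\O$ gives $\|w\|_2\le C\|u-u_h\|_0$. Pairing the dual equation with $u-u_h$ and integrating by parts on each element produces
\begin{equation*}
\|u-u_h\|_0^2 \;=\; a_h(u-u_h,w)\;-\;\sum_{\bK\in\Tau_h}\int_{\partial \bK}(u-u_h)\,\frac{\partial w}{\partial \bnu}\,\d s,
\end{equation*}
and the plan is to bound each of the two terms by $Ch^3\|u\|_3\|w\|_2$ and then absorb $\|w\|_2\le C\|u-u_h\|_0$.

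For the first term I would split $a_h(u-u_h,w)=a_h(u-u_h,w-\Pi_h w)+a_h(u-u_h,\Pi_h w)$. Cauchy--Schwarz together with Theorem~\ref{thm:h1} and \eqref{eq:hilbert} controls the first piece by $Ch^2\|u\|_3\cdot Ch\|w\|_2$. The second piece equals the primal consistency error $a_h(u,\Pi_h w)-\<f,\Pi_h w\>$ by \eqref{eq:solution}, and I would rerun the analysis of Section~3 with the specific test function $\Pi_h w$. The decisive improvement is that $\Pi_h w$ and $w$ share the same nodal values on every face, so $\sI_F^{x_i\pm}(\Pi_h w)=\sI_F^{x_i\pm}(w)$ and
\begin{equation*}
\|\Pi_h w-\sI_F^{x_i\pm}(\Pi_h w)\|_{0,F}\;\le\;\|\Pi_h w-w\|_{0,F}+\|w-\sI_F^{x_i\pm}w\|_{0,F}\;\le\; Ch^{3/2}\|w\|_{2,\bK},
\end{equation*}
gaining an extra half-power of $h$ over what is available for a generic $w_h\in\NC_0$. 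Paired with the Section~3 estimate $\|\partial u/\partial x_i-R_{\bK}(\partial u/\partial x_i)\|_{0,F}\le Ch^{3/2}|u|_{3,\bK}$ and summed elementwise, this yields $Ch^3\|u\|_3\|w\|_2$ for the primal consistency.

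For the remaining boundary sum, continuity of $u$ together with $w|_{\Gamma}=0$ gives $\sum_\bK\int_{\partial \bK}u\,\partial w/\partial\bnu=0$, so the sum reduces to $\sum_{F\in\F_h}\int_F[u_h]_F\,(\nabla w\cdot\bnu)\,\d s$, where on interior faces $[u_h]_F$ is the jump and on boundary faces it is the interior trace of $u_h$. Since $[u_h]_F\in\hbP_{SK}^{(1)}|_F$ vanishes at all five DOFs of $F$, Lemma~\ref{lem:orth} gives $\int_F[u_h]_F\,\d s=0$, allowing subtraction of any face-constant from $\nabla w\cdot\bnu$. I would then mirror the Section~3 manipulation with the roles of the smooth and nonconforming factors interchanged: use Lemma~\ref{lem:sIF-wh} to absorb the $\Theta$-part of an $R_{\bK}$-type projection applied to $\nabla w$, and decompose $[u_h]_F=[\Pi_h u]_F+[u_h-\Pi_h u]_F$, bounding the first summand by trace together with $\|\Pi_h u-u\|_{1,\bK}\le Ch^2\|u\|_{3,\bK}$ and the second by trace together with Theorem~\ref{thm:h1}. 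Combining all three contributions and dividing by $\|u-u_h\|_0$ yields the stated estimate.

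\textbf{The main obstacle} is securing the sharp $h^3$ rate for the dual boundary term. Lemma~\ref{lem:orth} supplies orthogonality against constants only, not against full $P_1$, so the direct Poincar\'e-type bound on $\nabla w\cdot\bnu-c_F$ is short by a half-power of $h$. Closing this gap is the delicate step and requires careful use of the anisotropic two-dimensional ``leftover space'' $\Span\{\hat x_j^2-\hat x_k^2,\,\hat x_l(\hat x_j^2-1)\}$ on each face, together with the smoothness of $u_h$ inherited from $u$ through the already-established energy estimate.
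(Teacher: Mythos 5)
Your proposal follows the same Aubin--Nitsche duality strategy as the paper; the differences are in bookkeeping. The paper poses the dual problem with the discrete data $\eta_h=\Pi_h u-u_h\in\NC_0$ rather than $u-u_h$, which lets the elementwise boundary sum be written at once as $\sum_{\bK}\<\eta_h-\sI_F\eta_h,\bnu\cdot(\grad\psi-M_F(\grad\psi))\>_{\p\bK}$ (using single-valuedness of $\sI_F\eta_h$ across interior faces and Lemma \ref{lem:orth}), and it then splits $a_h(\eta_h,\psi)=a_h(\eta_h,\psi-\Pi_h\psi)+a_h(\Pi_h u-u,\Pi_h\psi)+a_h(u-u_h,\Pi_h\psi)$. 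Your splitting of $a_h(u-u_h,w)$ is the analogue, and your treatment of the consistency piece $a_h(u,\Pi_h w)-\<f,\Pi_h w\>$ --- exploiting $\sI_F^{x_i\pm}(\Pi_h w)=\sI_F^{x_i\pm}(w)$ so that the test-function factor improves to $O(h^{3/2})\|w\|_{2,\bK}$ per face --- is exactly the ingredient needed for the paper's third term, which the paper dismisses with only the phrase ``bounded in the same fashion.'' On that point your write-up is more explicit than the paper's.

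Where you go astray is the closing paragraph: the ``main obstacle'' you identify is not an obstacle, and the cure you propose would not work anyway. An $R_\bK$-type $P_1$ projection of $\grad w$ cannot yield $O(h^{3/2})$ on faces because the dual solution is only in $H^2(\O)$, so $\grad w$ has just one more derivative and $\|\grad w\cdot\bnu-c_F\|_{0,F}\le Ch^{1/2}\|w\|_{2,\bK}$ is the best available. But that is all you need: Lemma \ref{lem:orth} lets you subtract the face average of $\grad w\cdot\bnu$, and the missing power of $h$ sits in the other factor, since the jump of $u_h$ is much smaller than a generic discrete jump. Indeed, writing $\jump{u_h}{F}=\jump{u_h-\Pi_h u}{F}+\jump{\Pi_h u-u}{F}$ and using that the first jump vanishes at the five nodes of $F$ (so it equals its own deviation from $\sI_F$),
\begin{equation*}
\Bigl(\sum_{F\in\F_h}\bigl\|\jump{u_h}{F}\bigr\|_{0,F}^2\Bigr)^{1/2}
\le Ch^{1/2}\|u_h-\Pi_h u\|_h
+C\bigl(h^{-1/2}\|u-\Pi_h u\|_{0}+h^{1/2}\|u-\Pi_h u\|_h\bigr)
\le Ch^{5/2}\|u\|_3
\end{equation*}
by Theorem \ref{thm:h1} and \eqref{eq:hilbert}. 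Pairing $h^{5/2}$ with $h^{1/2}$ gives the required $h^3\|u\|_3\|w\|_2$; this $h^{1/2}\cdot h^{1/2}\cdot\|\eta_h\|_h$ accounting is precisely how the paper bounds its analogous boundary term. No $P_1$-orthogonality on faces and no ``leftover space'' argument is required, and with that correction your proof closes.
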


\begin{proof}
Let $\eta_h = \Pi_h u - u_h\in \NC_0$ and consider the dual problem:
\begin{subeqnarray}
-\Delta \psi &=& \eta_h,\quad\O,\\
\psi &=& 0,\quad\p\O.
\end{subeqnarray}
Since $\eta_h \in L^2(\O)$, the elliptic regularity implies that
$\|\psi\|_2 \le C \|\eta_h\|.$ An application of \eqref{eq:hilbert} to the
triangle inequality makes us to prove only
$\|\eta_h \|_h \le  C h^2 \|u\|_3.$

First, we have from Theorem \ref{thm:h1} and \eqref{eq:hilbert} that
\begin{eqnarray}\label{eq:eta-tri}
\|\eta_h \|_h \le \|u-u_h\|_h + \| u - \Pi_h u\|_h \le C h^2 \|u\|_3.
\end{eqnarray}

Following the arguments in the derivation of
the energy estimate, we have
\begin{eqnarray*}
\|\eta_h\|^2 &=& -\sum_{\bK\in\Tau_h} \left(\eta_h,\Delta
\psi\right)_\bK \\
&=& \sum_{\bK\in\Tau_h} \left( \grad \eta_h,  \grad \psi\right)_\bK -
\sum_{\bK\in\Tau_h}  \<\eta_h, \bnu\cdot \grad \psi\>_{\p\bK}\\
&=& a_h(\eta_h,\psi) -
\sum_{\bK\in\Tau_h}  \<\eta_h - \sI_F \eta_h  , \bnu\cdot\grad \psi
 \>_{\p\bK},\\
&=& a_h(\eta_h,\psi) -
\sum_{\bK\in\Tau_h}  \<\eta_h - \sI_F \eta_h  , \bnu\cdot\left(\grad \psi
-M_{F}(\grad \psi)\right)\>_{\p\bK},
\end{eqnarray*}
where $M_F(\nabla \psi)$ denotes the value of
$\nabla \psi$ at the centroid of $F$.
Hence, invoking the elliptic regularity and \eqref{eq:eta-tri}
\begin{eqnarray}
\|\eta_h\|^2 &\le&
|a_h(\eta_h,\psi)| +
\left[\sum_{\bK\in\Tau_h} \left|\eta_h - \sI_F
  \eta_h\right|^2_{0,\p\bK}\right]^{\frac12}
  \left[\sum_{\bK\in\Tau_h} \left|\bnu\cdot\left(\grad \psi -
  M_F(\grad \psi)\right)\right|^2_{0,\p\bK}\right]^{\frac12} \nn \\
&\le& |a_h(\eta_h,\psi)| +
C h^{\frac12}  \|\eta_h\|_h  h^{\frac12} \| \psi \|_2 \nn\\
&\le&  |a_h(\eta_h,\psi)| + C h^3 \|u\|_h~ \|\eta_h\|.\label{eq:etah}
\end{eqnarray}
Thus it remains to bound $ |a_h(\eta_h,\psi)|.$ For this, write
\begin{eqnarray}\label{eq:eta-psi}
a_h(\eta_h,\psi) = a_h(\eta_h,\psi - \Pi_h\psi) +   a_h(\Pi_h u - u, \Pi_h\psi) +  a_h(u - u_h, \Pi_h\psi).
\end{eqnarray}
The first term in \eqref{eq:eta-psi} is bounded as follows:
\begin{eqnarray}
\nn
|a_h(\eta_h,\psi - \Pi_h\psi) | &\le& C \|\eta_h\|_h \|\psi-\Pi_h\psi\|_h \\ &\le&
C h^2 \|u\|_3 h\|\psi\|_2 \le C h^3 \|u\|_3 h\|\eta_h\|.
\label{eq:eta-psi-1}
\end{eqnarray}
Since the second term in \eqref{eq:eta-psi} can be decomposed by
\begin{eqnarray*}
a_h(\Pi_h u - u, \Pi_h\psi) &=&
\sum_{\bK\in\Tau_h}\left(\Pi_h u - u,-\Delta(\Pi_h \psi)\right)_\bK \\
&&\qquad+ \sum_{\bK\in\Tau_h} \<\Pi_h u -
u,\bnu\cdot\grad\Pi_h \psi)\>_{\p\bK}\\
&=& \sum_{\bK\in\Tau_h}\left(\Pi_h u - u,-\Delta\Pi_h \psi\right)_\bK \\
&&\qquad+ \sum_{\bK\in\Tau_h} \<\Pi_h u -
u,\bnu\cdot\left(\grad\Pi_h \psi -M_F\left(\grad\Pi_h
\psi\right)\right)\>_{\p\bK},
\end{eqnarray*}
it can be estimated as follows:
\begin{eqnarray}\label{eq:eta-psi-2}
|a_h(\Pi_h u - u, \Pi_h\psi)| \le C h^3\| u\|_3 \|\psi\|_2
+ C h^{\frac32} \|u\|_3 h^{\frac12} \|\psi\|_2 \le  C h^3\| u\|_3 \|\eta_h\|.
\end{eqnarray}
The third term in \eqref{eq:eta-psi} is bounded in the same fashion as
\begin{eqnarray}
|a_h(u - \Pi_h u, \Pi_h\psi ) | \le  C h^3\| u\|_3 \|\eta_h\|.
\label{eq:eta-psi-3}
\end{eqnarray}
Collecting \eqref{eq:eta-psi-1}--\eqref{eq:eta-psi-3} and plugging into
\eqref{eq:eta-psi} combined with \eqref{eq:etah}, one sees that
the theorem follows by deviding both sides by $\|\eta_h\|.$
\end{proof}

\section{Error estimates of other Smith-Kidger elements}
In this section we claim that the approximation of the solutions of the second-order elliptic problem
with Type 2 and 5 Smith-Kidger elements is also convergent in
optimal order. In these case, it is easy to check that the
orthogonality in Lemma \ref{lem:orth} holds. The difference during
the proof lies in the construction of the interpolation operator.
For the second type element, the interpolation of
$\sI_F^{x_1}$,
$\sI_F^{x_2}$, $\sI_F^{x_3}$ should be $\Span\{1,\y,\z,\y\z,\y^2\}$,
$\Span\{1,\x,\z,\x\z,\z^2\}$ and $\Span\{1,\x,\y,\x\y,\x^2\}$, respectively.
And for the fifth type element, the corresponding interpolation spaces
should be taken as $\Span\{1,\y,\z,\y\z,\z^2+\y^2\}$,
$\Span\{1,\x,\z,\x\z,\x^2+\z^2\}$ and $\Span\{1,\x,\y,\x\y,\x^2+\y^2\}$,
respectively.

For Type 6 element, the orthogonality in Lemma \ref{lem:orth} does
not hold, but the Eq. \eqref{eq:Fx} hods. Thus, we have
\begin{eqnarray*}
    |E_1| &=& \left|\sum_{\bK\in\Tau_h}\left(\int_{F_K^{x_1+}}\frac{\p u}{\p\bnu}
    (w-\sI_F^{x_1+}(w))\,\d s +\int_{F_K^{x_1-}}\frac{\p u}{\p\bnu}
    (w-\sI_F^{x_1-}(w))\,\d s\right)\right|\\
    &=& \left|\sum_{\bK\in\Tau_h}\left(\int_{F_K^{x+}}\left(\frac{\p u}{\p\bnu}-P_{\bK}^0(\frac{\p u}{\p\bnu})\right)
    (w-\sI_F^{x_1+}(w))\,\d s\right.\right.\\
    &&\left.\left.+\int_{F_K^{x_1-}}\left(\frac{\p u}{\p\bnu}-P_{\bK}^0(\frac{\p u}{\p\bnu})\right)
    (w-\sI_F^{x_1-}(w))\,\d s\right)\right|\\
    &\leq & Ch||u||_2||w||_h,
\end{eqnarray*}
where $$P_{\bK}^0\left(\frac{\p u}{\p\bnu}\right)=\frac{1}{|\bK|}\int_{\bK}\frac{\p u}{\p\bnu} \,\d s,$$
and $|\bK|=\int_{\bK}\,\d s$.
By a similar derivation, we will get
\begin{thm}
    Let $u\in H^2(\O)\cap H_0^1(\O)$ satisfy
    \eqref{eq:weak} and $u_h$ be the solution of \eqref{eq:solution} with
    the sixth type element. Then we have the energy norm
    error estimate:
    \begin{eqnarray*}
        ||u-u_h||_h\leq Ch||u||_2,\\
        ||u-u_h||_0\leq Ch^{2} \|u\|_2.
    \end{eqnarray*}
\end{thm}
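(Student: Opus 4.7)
\medskip

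\noindent\textbf{Proof proposal.} The plan is to mirror the argument used for Type 1 in Section 3, but to track carefully where the loss of one order occurs because the orthogonality property of Lemma \ref{lem:orth} no longer holds for Type 6. First I would invoke Strang's lemma (Lemma \ref{lem:strang}) to split $\|u-u_h\|_h$ into the interpolation error and the consistency error. For the interpolation error, the Type 6 element still contains $P_2(\hbK)$, so Bramble--Hilbert (as in \eqref{eq:hilbert}) gives $\|u-\Pi_h u\|_h \le C h\|u\|_2$, which is of the right order. Thus the task reduces to bounding the consistency term $\sup_{w_h\in\NC_0} |a_h(u,w_h)-\<f,w_h\>|/\|w_h\|_h$.

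Integrating by parts elementwise and grouping faces by normal direction yields the decomposition \eqref{eq:ExEyEz} into $E_1+E_2+E_3$. I would focus on $E_1$, the other two being symmetric. On each interior face $F\in\F_h^{(1)}$, I would use the interpolation operator $\sI_F^{x_1\pm}$ to rewrite the jump of $w_h$ and exploit Lemma \ref{lem:sIF-wh} (Eq.~\eqref{eq:Fx}) exactly as in Section 3. The key divergence from the Type 1 case is that Lemma \ref{lem:orth} fails, so I cannot subtract the centroid value $M_F(\nabla u)$ from $\nabla u$ for free; instead I subtract only the elementwise constant projection $P^0_{\bK}(\p u/\p\bnu)$, which is admissible because $w-\sI_F^{x_1\pm}(w)$ has a vanishing integral on $F$ in a weaker (constant-orthogonality) sense that still survives. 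After this subtraction, the standard trace inequality on $\bK$ combined with $|\nabla u - P^0_{\bK}\nabla u|_{0,\p\bK} \le C h^{1/2}\|u\|_{2,\bK}$ and $|w-\sI_F^{x_1\pm} w|_{0,\p\bK} \le C h^{1/2}\|w\|_{1,\bK}$ give $|E_1|\le C h\|u\|_2\|w\|_h$, which is precisely the calculation sketched just above the theorem. Combining with the analogous bounds for $E_2,E_3$ yields the $O(h)$ energy estimate.

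For the $L^2$ estimate, I would run the Aubin--Nitsche duality argument used in Theorem \ref{thm:l2}, but adapted to the lower regularity available here. Set $\eta_h = \Pi_h u - u_h$ and let $\psi$ solve $-\Delta\psi = \eta_h$ in $\O$, $\psi=0$ on $\p\O$, with $\|\psi\|_2 \le C\|\eta_h\|$. Writing $\|\eta_h\|^2 = a_h(\eta_h,\psi) - \sum_\bK \<\eta_h-\sI_F\eta_h,\bnu\cdot\grad\psi\>_{\p\bK}$ and further decomposing $a_h(\eta_h,\psi) = a_h(\eta_h,\psi-\Pi_h\psi) + a_h(\Pi_h u - u,\Pi_h\psi) + a_h(u-u_h,\Pi_h\psi),$ each piece gains one extra power of $h$ over the energy estimate by using $\|\psi-\Pi_h\psi\|_h \le Ch\|\psi\|_2$, the constant-projection trick on the boundary terms, and the already-proved $\|\eta_h\|_h \le Ch\|u\|_2$. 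Collecting pieces gives $\|\eta_h\|^2 \le Ch^2\|u\|_2\|\eta_h\|$, and dividing by $\|\eta_h\|$ yields $\|u-u_h\|_0 \le Ch^2\|u\|_2$ after one more triangle inequality with $\|u-\Pi_h u\|_0 \le Ch^2\|u\|_2$.

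The main obstacle I anticipate is being careful that replacing the centroid projection by the $P^0$ projection on $\p\bK$ really suffices: the Type 6 face integrals of nonconforming traces vanish only against constants (not against all of $P_1$ as in Lemma \ref{lem:orth}), so one must verify directly that $\int_F (w|_{\bK'}-w|_{\bK''})\,ds = 0$ for each interior face, or equivalently that the common face integral of $\sI_F^{x_1\pm}w$ matches that of $w$. A brief check on the reference face using the explicit basis in \eqref{eq:phiV}--\eqref{eq:phiF} for Type 6 should confirm this, and the rest of the estimate is then routine.
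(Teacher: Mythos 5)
Your overall skeleton---Strang's lemma, the $O(h)$ interpolation bound, the decomposition into $E_1+E_2+E_3$, the subtraction of the elementwise constant $P^0_{\bK}(\p u/\p\bnu)$, and a duality argument for the $L^2$ bound---matches the paper's derivation. However, there is a genuine gap in how you justify the key step, and the ``verification'' you flag as the main obstacle is of a statement that is actually \emph{false} for Type 6. You argue that subtracting $P^0_{\bK}(\p u/\p\bnu)$ is admissible because $w-\sI_F^{x_1\pm}(w)$ has a vanishing integral on $F$ in a ``constant-orthogonality'' sense, and you propose to check that $\int_F\bigl(w|_{\bK'}-w|_{\bK''}\bigr)\,\d s=0$ on each interior face. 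This per-face orthogonality fails for Type 6 even against constants: the trace space $\hbP_{SK}^{(6)}|_{\hx_1=1}$ contains $(\hx_2^2-1)\hx_3^2=\hx_2^2\hx_3^2-\hx_3^2$ (a trace of $\hx_1\hx_2^2\hx_3^2-\hx_3^2\in\hbP_{SK}^{(6)}$), which vanishes at the four vertices and the centroid of the face yet satisfies $\int_{-1}^1\int_{-1}^1(\hx_2^2-1)\hx_3^2\,\d\hx_2\,\d\hx_3=-8/9\neq0$. This is exactly the failure of Lemma \ref{lem:orth} that the paper records for Type 6; if your argument genuinely relied on that check, it would collapse.

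The correct mechanism, and the one the paper uses, is elementwise rather than facewise: since $P^0_{\bK}(\grad u)$ is a single constant vector on $\bK$ and, by Lemma \ref{lem:sIF-wh}, the residual $w-\sI_F^{x_1+}(w)$ on $F_K^{x_1+}$ coincides as a function of $(x_2,x_3)$ with $w-\sI_F^{x_1-}(w)$ on $F_K^{x_1-}$, while the outward normal components $\nu_1$ on these two faces have opposite signs, the two contributions $\int_{F_K^{x_1\pm}}P^0_{\bK}(\p_i u)\,\nu_i\,(w-\sI_F^{x_1\pm}(w))\,\d s$ cancel \emph{within each element}. No zero-mean property of the individual face jumps is needed (and none is available); this is precisely why only a constant, rather than the linear-preserving combination $M_F+\Theta$ used for Type 1, can be subtracted, and hence why one order is lost. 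Once this justification is corrected, your trace and Bramble--Hilbert estimates giving $|E_1|\le Ch\|u\|_2\|w\|_h$ and your duality argument for the $O(h^2)$ $L^2$ bound go through as in the paper. One further point worth making explicit (the paper only asserts it): for Type 6 one must exhibit interpolation spaces inside $\hbP_{SK}^{(6)}|_{\hx_1=\pm1}$ for which Lemma \ref{lem:sIF-wh} holds, e.g.\ $\Span\{1,\hx_2,\hx_3,\hx_2\hx_3,\hx_2^2\hx_3^2\}$ with bubble $1-\hx_2^2\hx_3^2$, so that the complementary trace functions $\hx_2^2,\hx_3^2,\hx_2^2\hx_3,\hx_2\hx_3^2$ all arise from polynomials even in $\hx_1$.
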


\section{A new 14-node brick element}
In this section, we present a new element with 14-node. The degrees
of freedom are the same with those in Smith-Kidger element and
Meng-Sheen-Luo-Kim element. But the shape function space is taken as
$P_2\oplus \Span\{\cor{\x\y\z,} \x(\y^2+\z^2),\y(\x^2+\z^2),\z(\x^2+\y^2)\}$. 
\cor{
Denote the corresponding higher-degree polynomials to those in \eqref{eq:r} as
follows: 
\begin{eqnarray}\label{eq:new-r}
\begin{split}
r_0(\hx_1,\hx_2,\hx_3) = \hx_1\hx_2\hx_3,~ r_1(\hx_1,\hx_2,\hx_3) =
\frac12\hx_1(\hx_2^2+\hx_3^2)
,\\
r_2(\hx_1,\hx_2,\hx_3) =\frac12\hx_2(\hx_1^2+\hx_3^2),  ~r_3(\hx_1,\hx_2,\hx_3) = \frac12\hx_3(\hx_1^2+\hx_2^2).
\end{split}
\end{eqnarray}
Then, again equipped with the 8 vertex values plus 6 face integrals DOFs,
the basis functions corresponding to the vertices and face-integrals are given
exactly same as the formulae \eqref{eq:phiV}, \eqref{eq:phiF}, and \eqref{eq:ell-q-r}.
} 

\cor{In order to analyze convergence}, we need to verify the orthogonality in
Lemma \ref{lem:orth} and Eq. \eqref{eq:Fhx}.
The orthogonality can be checked directly \cor{as in} the proof in Lemma
\ref{lem:orth}. \cor{In order to check} Eq. \eqref{eq:Fhx}, \cor{it is enough
  to
define}
the corresponding interpolation spaces as $\Span\{1,\y,\z,\y\z,\z^2+\y^2\}$, $\Span\{1,\x,\z,\x\z,\x^2+\z^2\}$
and $\Span\{1,\x,\y,\x\y,\x^2+\y^2\}$, respectively. Thus, \cor{by following the
same argument as in the previous sections}, we also get optimal convergence 
for the
second-order elliptic problems. That is, in this case, Theorems
\ref{thm:h1} and \ref{thm:l2} hold. 

\section{Further remarks and conclusions}
In this paper, we have proved that for second-order elliptic
problems, the Smith-Kidger element of type 1, 2 and 5 can obtain
optimal convergence order both in energy norm and $L_2(\O)$ norm,
while the sixth type element loses one order of accuracy in each
norm. In the proof, the key points lie in that they have weak
orthogonality (Lemma \ref{lem:orth}) and satisfy Eq. \eqref{eq:Fhx}.
In \cite{2013-Meng-p-}. \cor{We} also proposed another kind of DOFs, that
is, the \cor{values} at the eight vertices and the integration {values over} six
faces. Indeed, it is easy to check that Type 1, 2, 5 and the new
element presented in this paper \cor{give} optimal convergence \cor{orders}
for second-order elliptic problems. Besides, we can show that 
\cor{if the face-centroid values DOFs are replaced by the face integrals DOFs,}
Type 6 \cor{element} 
also \cor{are of optimal-order} convergence \cor{owing to} a weak
orthogonality, \cor{thus} improving one order accuracy.

\bibliographystyle{abbrv}

    \end{document}